\theoremstyle{plain}
\newtheorem{thm}{Theorem}
\theoremstyle{remark}
\newcommand{\Z}{\mathbb{Z}}
\newcommand{\cond}{\,\vert\,}
\DeclareMathOperator{\Fix}{Fix}
\DeclareMathOperator{\rank}{rank}
\DeclareMathOperator{\codim}{codim}
\begin{document}
\title[The Hopf conjecture for manifolds with low cohomogeneity]{The Hopf
conjecture for manifolds with low cohomogeneity or high symmetry rank}
\author{Thomas P\smash{\"u}ttmann}
\address{Ruhr-Universit\"at Bochum\\Fakult\"at f\"ur Mathematik\\
         D-44780 Bochum\\Germany}
\email{puttmann@math.ruhr-uni-bochum.de}
\author{Catherine Searle}
\address{Instituto de Matematicas\\Unidad Cuernavaca-UNAM\\
Apartado Postal 273-3\\ Admon. 3\\
Cuernavaca\\Morelos, 62251\\Mexico}
\email{csearle@matcuer.unam.mx}
\date{May 15, 2000}
\thanks{This paper has its roots in the nonnegative curvature seminar
at the University of Pennsylvania in the academic year 1999--2000.
The first named author would like to thank the University of Pennsylvania
for their hospitality and A.~Rigas, B.~Wilking, W.~Ziller for advice
and many interesting discussions during this time. The second named
author was supported in part by a grant from CONACYT project number 28491-E}
\subjclass{53C20}

\begin{abstract}
  We prove that the Euler characteristic of an even-dimensional compact
  manifold with positive (nonnegative) sectional curvature is positive
  (nonnegative) provided that the manifold admits an isometric action
  of a compact Lie group $G$ with principal isotropy group $H$ and
  cohomogeneity $k$ such that $k - (\rank G - \rank H)\le 5$.
  Moreover, we prove that the Euler characteristic of a compact Riemannian
  manifold $M^{4l+4}$ or $M^{4l+2}$ with positive sectional curvature is
  positive if $M$ admits an effective isometric action of a torus $T^l$,
  i.e., if the symmetry rank of $M$ is $\ge l$.
\end{abstract}

\maketitle

The Gauss-Bonnet theorem states that the Euler characteristic
of a closed surface $M$ is determined by its total curvature:
$\chi(M) =2\pi\int_M K$. In particular, if the curvature is positive
(nonnegative), the Euler characteristic of the surface is positive
(nonnegative).
H.~Hopf \cite{hopf} generalized in 1925 the Gauss-Bonnet theorem to
even-dimensional hypersurfaces of Euclidean space and posed in the early
1930's (according to Berger \cite{berger}) the question whether a compact
even-dimensional manifold which admits a metric of positive (nonnegative)
sectional curvature must have positive (nonnegative) Euler characteristic.

Indications that the Hopf conjecture should be true came from the
generalizations of the Gauss-Bonnet theorem: Fenchel \cite{fenchel}
and Allendoerfer \cite{allendoerfer} proved in 1940 independently a
Gauss-Bonnet formula for submanifolds of Euclidean space with arbitrary
codimension. Three years later Allendoerfer and Weil \cite{weil}
(using E.~Cartan's result that any Riemannian manifold can locally be
embedded into Euclidean space) established the theorem in its final
intrinsic version:
For any even-dimensional manifold the Euler characteristic can be obtained
by integrating a function derived from the curvature tensor, the so called
Gauss-Bonnet integrand.
Chern \cite{chernone} gave the first intrinsic proof of this theorem in 1944.

After this, many attempts were made to settle the stronger algebraic
Hopf conjecture: A curvature tensor with positive (nonnegative)
sectional curvature yields a positive (nonnegative) Gauss-Bonnet integrand.
Milnor (unpublished, see \cite{cherntwo}) actually proved the
algebraic Hopf conjecture in dimension\,$4$, but finally in 1976
Geroch \cite{geroch} found curvature tensors with positive sectional
curvature in all even dimensions $\ge 6$ that do not provide a positive
Gauss-Bonnet integrand.

A different approach to the Hopf conjecture is to consider first 
Riemannian manifolds that have a certain amount of symmetry.
Hopf himself and Samelson \cite{samelson} proved in 1941 that the Euler
characteristic of every compact homogeneous space $G/H$ is nonnegative
and positive if and only if $\rank G = \rank H$ holds.
The key observations in their proof are that a regular element in
the compact Lie group $G$ has at most finitely many fixed points in
the homogeneous space $G/H$ and that each of these fixed points has
fixed point index\,$1$.
In 1972 Wallach \cite{wallach} then showed that for any even-dimensional
homogeneous space of positive sectional curvature one actually has
$\rank G = \rank H$. Therefore, the Hopf conjecture is true for
homogeneous spaces. Recently, Podest\`{a} and Verdiani \cite{podesta}
proved among other things that the Hopf conjecture also holds for
cohomogeneity one manifolds. We show that much weaker symmetry assumptions
are sufficient.

\begin{thm}
\label{thm:main}
  Let $M$ be a compact even-dimensional Riemannian manifold with positive
  (nonnegative) sectional curvature. Let $G\times M\to M$ be an isometric
  action of a compact Lie group $G$ with principal isotropy group $H$ and
  cohomogeneity $k$. If
  \begin{gather*}
    k - (\rank G - \rank H) \le 5
  \end{gather*}
  then $M$ has positive (nonnegative) Euler characteristic.
\end{thm}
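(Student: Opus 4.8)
The plan is to reduce the computation of $\chi(M)$ to the fixed point set of a maximal torus and then to the low-dimensional cases of the Hopf conjecture, where the Gauss--Bonnet theorem and Milnor's theorem apply. Let $T\subseteq G$ be a maximal torus. The first ingredient is the classical fact from the theory of transformation groups that an isometric (indeed any smooth) torus action preserves the Euler characteristic, so that $\chi(M)=\chi(\Fix(T))$. Each connected component $N$ of $\Fix(T)$ is a closed, totally geodesic submanifold of $M$ and therefore inherits positive (nonnegative) sectional curvature; moreover $N$ is even-dimensional, since the isotropy representation of $T$ on the normal space of $N$ splits into two-dimensional nontrivial weight spaces. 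Thus $\chi(M)=\sum_N\chi(N)$, and it suffices to control $\chi(N)$ for each component.

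Next I would bound the dimension of each component $N$ by the defect $\delta:=k-(\rank G-\rank H)$. Fix $q\in N$; then $T\subseteq G_q$, and since $T$ is a maximal torus of $G$ it is a maximal torus of $G_q$, whence $(\mathfrak g/\mathfrak g_q)^{T}=\mathfrak t/\mathfrak t=0$. Consequently $T_qN=(T_qM)^{T}$ coincides with the $T$-fixed subspace $(\nu_q)^{T}$ of the slice $\nu_q$ at $q$. The slice representation of $G_q$ on $\nu_q$ has the same cohomogeneity $k$ and the same principal isotropy group $H$ as the original action, and $\rank G_q=\rank G$. Hence the estimate on $\dim N$ reduces to the purely representation-theoretic identity
\begin{gather*}
  \dim V^{T_K}=c-(\rank K-\rank H_V),
\end{gather*}
valid for any orthogonal representation $V$ of a compact connected group $K$ with maximal torus $T_K$, cohomogeneity $c$, and principal isotropy group $H_V$. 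Granting this, every nonempty component $N$ satisfies $\dim N=\delta\le 5$; being even-dimensional, this forces $\dim N\le 4$, while if $\delta$ is odd then $\Fix(T)=\emptyset$ and $\chi(M)=0$.

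Finally I would invoke the low-dimensional cases of the algebraic Hopf conjecture recalled in the introduction: in dimensions $\le 2$ the Gauss--Bonnet theorem, and in dimension $4$ Milnor's theorem, show that a closed manifold of dimension $\le 4$ with positive (nonnegative) sectional curvature has positive (nonnegative) Euler characteristic. Summing over the components of $\Fix(T)$ gives $\chi(M)\ge 0$ in the nonnegative case. In the positive case one additionally needs $\Fix(T)\neq\emptyset$; this follows from Berger's theorem that a nontrivial Killing field on a compact even-dimensional manifold of positive curvature has a zero, applied inductively along a chain of subcircles generating $T$. Then each $\chi(N)>0$, and therefore $\chi(M)=\sum_N\chi(N)>0$.

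I expect the main obstacle to be the representation-theoretic identity for $\dim V^{T_K}$, that is, showing that the $T_K$-fixed space of the slice has exactly the dimension $c-(\rank K-\rank H_V)$. The natural approach is to compare the orbit space $V/K$ with $V^{T_K}/W$, where $W$ is the Weyl group, using that a generic $K$-orbit meets $V^{T_K}$ in a single $W$-orbit, and then to account carefully for the nonzero weights of $T_K$ on $V$ and on $\mathfrak k/\mathfrak h_V$. Once this linear-algebra input is secured, the remaining steps are standard.
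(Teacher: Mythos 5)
Your overall architecture coincides with the paper's: pass to the fixed point set of a maximal torus $T\subseteq G$, use $\chi(M)=\chi(M^T)$, bound the dimension of each component of $M^T$ by $k-(\rank G-\rank H)\le 5$, and finish with the known low-dimensional cases of the Hopf conjecture. The reduction steps are sound (your weight-space argument for even codimension works without the paper's passage to the orientable double cover), and invoking Berger's theorem to rule out $M^T=\emptyset$ in the positively curved case is exactly what the paper does. The one substantive difference is that the paper simply cites Bredon's Theorem IV.5.3 for the dimension bound, whereas you attempt to prove it, and that is where the proposal breaks down.

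Your representation-theoretic ``identity'' $\dim V^{T_K}=c-(\rank K-\rank H_V)$ is false as an equality. Take $K=SU(2)$ acting on $V=\mathfrak{su}(2)\oplus\mathbb{C}^2$ (adjoint plus standard representation): here $\dim V^{T_K}=\dim\mathfrak{t}=1$, the principal isotropy group is trivial, the principal orbits are $3$-dimensional, so $c=4$ and $c-(\rank K-\rank H_V)=3\neq 1$. The same example defeats your proposed method of proof: every principal orbit has nonzero $\mathbb{C}^2$-component, hence does not meet $V^{T_K}$ at all, so it is not true that a generic $K$-orbit meets $V^{T_K}$ in a single Weyl-group orbit. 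Your downstream claims inherit the error --- the components of $M^T$ need not all have dimension $\delta$ (for $SU(2)$ acting on the round $S^6\subset\mathfrak{su}(2)\oplus\mathbb{C}^2$ one has $\delta=2$ while the $T$-fixed set is a $0$-sphere), and $\delta$ odd does not force $M^T=\emptyset$. What rescues the argument is that only the inequality $\dim V^{T_K}\le c-(\rank K-\rank H_V)$ is needed to get $\dim N\le 5$, and that inequality, in the form $\dim F(T,M)\le k-(\rank G-\rank H)$ for locally smooth actions, is precisely Bredon's Theorem IV.5.3. You should either cite that result, as the paper does, or prove the inequality correctly (e.g.\ by induction over orbit types using the slice theorem); the equality and the orbit-space comparison you sketch cannot be made to work.
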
 

\begin{proof}
  If $M$ is nonorientable, then the action of $G$ can be lifted to an
  action by orientation preserving isometries
  (see \cite[Corollary I.9.4]{bredon}) on the orientable double covering
  space of $M$. We can therefore assume that $M$ is orientable.
  
  We consider the fixed point set
  $M^T = \{p\in M\cond \psi(p) = p\text{ for all $\psi\in T$}\}$
  of a maximal torus $T$ of $G$. Note that $M^T$ is equal
  to the fixed point set of a generating element $\psi\in T$, i.e.,
  of an element $\psi$ with $\overline{\{\psi^m\cond m\in \Z\}} = T\}$.
  If the fixed point set $M^T$ is empty then there exists a Killing
  field without zeros. This implies that $M$ cannot have positive 
  sectional curvature (by Berger's theorem, see e.g. \cite{wallach})
  and that the Euler characteristic is zero. We can therefore assume
  that $M^T$ is nonempty. Now each of the finitely many components of $M^T$
  is a totally geodesic submanifold of $M$ with even codimension and
  the Euler characteristic of $M$ is the sum of the Euler characteristics
  of the components (see \cite[Chapter II]{kobayashi}).
  By Theorem\,IV.5.3 of \cite{bredon} each component $N$ of $M^T$ satisfies
  \begin{gather*}
    \dim N \le k - (\rank G - \rank H) \le 5.
  \end{gather*}
  Since $N$ is even-dimensional and the Hopf conjecture holds in dimensions
  $2$ and $4$ we are done.
\end{proof}

Note that $k - (\rank G - \rank H) \le \dim M - 2\rank G$
(see \cite[Corollary\,IV.5.4]{bredon}) if the action of $G$ is effective.
Hence we get as a special case of Theorem\,\ref{thm:main} that any
compact even-dimensional Riemannian manifold $M^{2l+4}$ with positive
(nonnegative) sectional curvature has positive (nonnegative) Euler
characteristic if $M^{2l+4}$ admits an effective isometric torus action
$T^l\times M\to M$. Using a result from \cite{searle} we can improve
this result in the case of positive sectional curvature.
\begin{thm}
\label{t:symrank}
  Let $M^{4l+2}$ or $M^{4l+4}$ be a Riemannian manifold with
  positive sectional curvature that admits an almost effective
  isometric $T^l$-action. Then for any $T^1\subset T^l$
  the Euler characteristics of all the components of the fixed
  point set $\Fix(M;T^1)$ are positive. In particular, $\chi(M) > 0$.
\end{thm}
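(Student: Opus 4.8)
The plan is to reduce the positivity of $\chi(M)$ to the positivity of the Euler characteristics of the individual fixed-point components of a circle, and then to establish the latter by an induction on $l$. For the reduction, fix any $T^1\subset T^l$. Since $M$ is even-dimensional and positively curved, a nontrivial Killing field has a zero (Berger, see \cite{wallach}), so $\Fix(M;T^1)\neq\emptyset$; and exactly as in the proof of Theorem~\ref{thm:main}, the components of $\Fix(M;T^1)$ are finitely many totally geodesic submanifolds of even codimension with $\chi(M)=\sum_N\chi(N)$ (see \cite{kobayashi}). Hence once every component $N$ satisfies $\chi(N)>0$ we obtain $\chi(M)>0$, and the whole theorem is reduced to proving $\chi(N)>0$ for each such $N$.

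For this I would induct on $l$, proving the statement for every even-dimensional positively curved manifold of dimension at most $4l+4$ carrying an almost effective isometric $T^l$-action; this unified hypothesis subsumes both families $M^{4l+2}$ and $M^{4l+4}$. The base case $l=0$ forces $\dim M\le 4$, where the Hopf conjecture holds. For the inductive step, take a component $N$ of $\Fix(M;T^1)$: it is closed, totally geodesic and hence positively curved, even-dimensional, and since $T^l$ is abelian and fixes $N$ setwise while $T^1$ fixes it pointwise, the quotient $T^{l-1}=T^l/T^1$ acts almost effectively on $N$. To feed $N$ back into the inductive hypothesis I need $\dim N\le 4(l-1)+4=4l$, that is $\codim N\ge \dim M-4l$. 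When $\dim M\le 4l+2$ this is automatic from $\codim N\ge 2$; when $\dim M=4l+4$ it requires $\codim N\ge 4$. Every case therefore closes by induction except one.

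The single obstruction, and the step I expect to be hardest, is a codimension-two component $N$ in the top case $\dim M=4l+4$: here $\dim N=4l+2$ and $N$ retains only the residual $T^{l-1}$, one rank short of what the hypothesis for a $(4l+2)$-manifold demands, so that neither the induction nor, via the remark following Theorem~\ref{thm:main}, the main theorem applies directly. Such an $N$ is a fixed-point component of maximal dimension, and it is precisely here that I would bring in the result of \cite{searle}: positive curvature severely restricts small-codimension totally geodesic fixed-point sets, and the structural conclusion it yields for $N$ is what must be exploited to force $\chi(N)>0$ and thereby close the induction.
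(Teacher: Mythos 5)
Your skeleton (reduce to components via $\chi(M)=\chi(\Fix(M;T^1))$, induct on $l$ with the strengthened statement, isolate codimension two for Grove--Searle) matches the paper's, but the inductive step as you run it has a genuine gap. The assertion that $T^{l-1}=T^l/T^1$ acts almost effectively on $N$ does not follow from commutativity: commutativity only gives that the $T^l$-action on $N$ factors through $T^l/T^1$, and the kernel $K$ of the action on $N$ can be a subtorus of dimension $j\ge 2$ containing $T^1$. (Almost effectiveness of $T^l$ on $M$ only bounds $j$ by $\tfrac12\codim N$, via the weight decomposition of the $K$-representation on the normal space of $N$.) When $j\ge 2$ you retain only a $T^{l-j}$-action on $N$, and the dimension count then fails in more cases than the single one you flag. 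For instance, if $\dim M=4l+4$ and $T^1$ is the diagonal of a $2$-torus in $T^l$ that rotates two complex normal directions independently (model: $T^l$ acting by coordinatewise rotation on a round sphere), then $N$ has codimension $4$, so $\dim N=4l$, but only $T^{l-2}$ survives on $N$ --- one circle short of your hypothesis for a $4l$-manifold. The paper's device for exactly this problem is \emph{not} to induct on $N$ directly: one chooses an auxiliary circle $\tilde T^1\subset T^l$ with $N\subset\Fix(M;\tilde T^1)$ such that the component $\tilde N\supseteq N$ of $\Fix(M;\tilde T^1)$ has \emph{maximal} dimension. That maximality (via the slice theorem and the representation theory of tori) is what forces $T^{l-1}=T^l/\tilde T^1$ to act almost effectively on $\tilde N$; if $\codim\tilde N\ge 4$ the inductive hypothesis applies to $\tilde N$, and $\chi(N)>0$ because $N$ is a component of $\Fix(\tilde N;T^1)$ --- which is precisely why the theorem must be stated for all components of circle fixed-point sets rather than only for $\chi(M)$.

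The remaining case, $\codim\tilde N=2$, is then the only place Grove--Searle enters, and your sketch of it is too thin to close the argument. The conclusion of \cite{searle} is about $M$, not about $N$: a positively curved manifold admitting a circle whose fixed-point set has a codimension-two component is differentiably covered by a sphere or a complex projective space. One then still needs Bredon's structure theorems for circle actions on $\Z$-cohomology spheres and projective spaces \cite[Chapters III and VII]{bredon} to conclude that every component of the fixed-point set of \emph{every} circle acting on such an $M$ --- in particular your original $N$, which need not be the codimension-two component itself --- has positive Euler characteristic. Without these two inputs named explicitly, the codimension-two case remains open in your proposal.
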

\begin{proof}
  As above we can assume that $M$ is orientable in order to have
  even-dimensional fixed point sets.
  The proof is done by induction. For $l=0$ note that the Hopf conjecture
  is true in dimensions $2$ and $4$. For the induction step consider
  $M^{4l+6}$ or $M^{4l+8}$ with an almost effective $T^{l+1}$-action.
  Consider any circle $T^1\subset T^{l+1}$ and any component $N$
  of its fixed point set in $M$. We will show that $\chi(N)>0$.
  Choose an $\tilde T^1\subset T^{l+1}$ such that
  $N\subset \Fix(M;\tilde T^1)$ and such that the component $\tilde N$
  of $\Fix(M;\tilde T^1)$ that contains $N$ has maximal dimension.
  It follows from the slice theorem and from the representation theory of
  tori that $T^l = T^{l+1}/\tilde T^1$ acts almost effectively on $\tilde N$.
  If $\codim \tilde N \ge 4$ we know from the induction assumption
  that in particular $N$ as a component of $\Fix(\tilde N;T^1)$ has
  positive Euler characteristic and hence we are done.
  In the case where $\codim \tilde N = 2$ we know from \cite{searle}
  that $M$ is differentiably covered by a sphere or a complex projective
  space. From results of Bredon \cite[Chapter III and VII]{bredon}
  it follows that all the components of the fixed point set of any circle
  action on $M$ have positive Euler characteristic. Thus in particular
  $N$ has positive Euler characteristic.
\end{proof}

After this paper was accepted for publication we have been informed
that Xiaochun Rong obtained Theorem\,\ref{t:symrank} independently
(see \cite{rong}). In his paper he gives many more results on the
topology of positively curved manifolds with high symmetry rank.

%
%
\nocite{*}


\begin{thebibliography}{AW}
%
\bibitem[A]{allendoerfer}
C.\,B.~Allendoerfer, {\em The Euler number of a Riemannian manifold},
Amer.\ J.\ Math. {\bf 62} (1940), 243--248.
%
\bibitem[AW]{weil}
C.\,B.~Allendoerfer, A.~Weil, {\em The Gauss-Bonnet theorem for Riemannian
polyhedra}, Trans.\ Amer.\ Math.\ Soc. {\bf 53} (1943), 101--129.
%
\bibitem[Be]{berger}
M.~Berger, {\em Riemannian geometry during the second half of the twentieth
century}, University Lecture Series, 17,  American Mathematical Society,
Providence, RI, 1998.
%
\bibitem[Br]{bredon}
G.\,E.~Bredon, {\em Introduction to compact transformation groups},
Pure and Applied Mathematics, Vol. 46, Academic Press, New York-London 1972.
%
\bibitem[C1]{chernone}
S.\,S.~Chern, {\em A simple intrinsic proof of the Gauss-Bonnet formula for
closed Riemannian manifolds}, Ann.\ of\ Math.(2) {\bf 45} (1944), 747--752.
%
\bibitem[C2]{cherntwo}
S.\,S.~Chern, {\em On curvature and characteristic classes of a Riemann
manifold}, Abh.\ Math.\ Sem.\ Univ.\ Hamburg {\bf 20} (1955), 117--126.
%
\bibitem[F]{fenchel}
W.~Fenchel, {\em On total curvatures of Riemannian manifolds: I.},
J.\ London\ Math.\ Soc. {\bf 15} (1940), 15--22. 
%
\bibitem[G]{geroch}
R.~Geroch, {\em Positive sectional curvature does not imply positive
Gauss-Bonnet integrand}, Proc.\ Amer.\ Math.\ Soc. {\bf 54} (1976),
267--270.
%
\bibitem[GS]{searle}
K.~Grove, C.~Searle, {\em Positively curved manifolds with maximal
symmetry-rank}, J.\ Pure Appl.\ Algebra {\bf 91} (1994), 137--142.
%
\bibitem[H]{hopf}
H.~Hopf, {\em {\"U}ber die Curvatura integra geschlossener Hyperfl{\"a}chen},
Math.\ Ann. {\bf 95} (1925), 340--367.
%
\bibitem[HS]{samelson}
H.~Hopf, H.~Samelson, {\em Ein Satz {\"u}ber die Wirkungsr{\"a}ume geschlossener
Liescher Gruppen}, Comment.\ Math.\ Helv. {\bf 13} (1941), 240--251. 
%
\bibitem[K]{kobayashi}
S.~Kobayashi, {\em Transformation groups in differential geometry},
Ergebnisse der Mathematik und ihrer Grenzgebiete, Band 70, Springer,
Berlin-Heidelberg-New York 1972.
%
\bibitem[PV]{podesta}
F.~Podest\`{a}, L.~Verdiani, {\em Totally geodesic orbits of isometries},
Ann.\ Global Anal.\ Geom. {\bf 16} (1998), 413--418.
%
\bibitem[R]{rong}
X.~Rong, {\em Positively curved manifolds with almost maximal symmetry rank},
to appear.
%
\bibitem[W]{wallach}
N.\,R.~Wallach, {\em Compact homogeneous Riemannian manifolds with strictly
positive sectional curvature},
Ann.\ of Math. {\bf 96} (1972), 277--295.
%
\end{thebibliography}
\end{document}